\newtheorem{theorem}{Theorem}[section]
\newtheorem{lemma}[theorem]{Lemma}
\theoremstyle{definition}
\theoremstyle{remark}
\numberwithin{equation}{section}
\begin{document}

\title[Domination by positive weak$^{*}$ Dunford-Pettis operators]
 {Domination by positive weak$^{*}$ Dunford-Pettis operators on Banach lattices }
\author[J.X. Chen]
{Jin Xi Chen }

\address{College of Mathematics and Information Science, Shaanxi Normal
University, Xi'an 710062, P.R. China}
\address{Department of Mathematics, Southwest Jiaotong
University, Chengdu 610031, P.R. China}
 \email{jinxichen@home.swjtu.edu.cn}

\author[Z.L. Chen]
{Zi Li Chen}
\address{Department of Mathematics, Southwest Jiaotong
University, Chengdu 610031, P.R. China}
\email{zlchen@home.swjtu.edu.cn}

\author[G.X. Ji]
{Guo Xing Ji}
\address{College of Mathematics and Information Science, Shaanxi Normal
University, Xi'an 710062, P.R. China}
\email{gxji@snnu.edu.cn}

\thanks{The first author was supported in part by NSFC (No.11301285) and the Fundamental Research Funds for the Central Universities (SWJTU11CX154). The second author was supported in part by the Fundamental Research Funds for the Central Universities (SWJTU12ZT13). The third author was supported in part by NSFC (No.11371233).}

\subjclass[2000]{Primary 46B42; Secondary 46B50, 47B65}

\keywords{limited set, domination property, weak$^*$ Dunford-Pettis operator, positive operator, Banach lattice}

\begin{abstract}
Recently, J. H'michane et al. introduced the class of weak$^*$ Dunford-Pettis operators on Banach spaces, that is, operators which send weakly compact sets onto limited sets. In this paper the domination problem for weak$^*$ Dunford-Pettis operators is considered. Let $S, T:E\rightarrow F$ be two positive operators between Banach lattices $E$ and $F$ such that $0\leq S\leq T$. We show that if $T$ is a weak$^{*}$ Dunford-Pettis operator and $F$ is $\sigma$-Dedekind complete, then $S$ itself is weak$^*$ Dunford-Pettis.
\end{abstract}

\maketitle \baselineskip 5mm

\section{Introduction}
\par Throughout this paper $X,\,Y$ will denote  Banach spaces, and $E,\,F$ will denote Banach lattices. $sol(A)$ denotes the solid hull of a subset $A$ of a Banach lattice. The positive cone of $E$ will be denoted by $E^{\,+}$.   Following K. T. Andrews \cite{And} (or J. Bourgain and J. Diestel \cite{BD}) we say that a norm bounded subset $A$ of $X$ is  a \textit{Dunford-Pettis set} (resp. a \textit{limited set}) whenever every weakly null sequence in $X^{*}$ (resp. weak$^{*}$ null sequence in $X^{*}$) converges uniformly to zero  on $A$. Clearly, every relatively compact set in $X$ is a limited set, and every limited set in $X$ is a Dunford-Pettis set, but the converses are not true in general. Let us recall that a linear operator $T:X\rightarrow Y$ is called to be a \textit{Dunford-Pettis operator} if $x_{n}\xrightarrow {w} 0$ in $X$ implies $\|Tx_n\|\rightarrow0$, equivalently, if $T$ carries relatively weakly compact subsets of $X$ onto relatively compact subsets of $Y$. Aliprantis and Burkinshaw \cite{AB2} introduced a class of operators related to the Dunford-Pettis operators,  the so-called weak Dunford-Pettis operators. A bounded linear operator $T:X\rightarrow Y$ between Banach spaces is said to be a \textit{weak Dunford-Pettis operator} whenever $x_{n}\xrightarrow {w} 0$ in $X$ and $f_{n}\xrightarrow {w} 0$ in $Y^{*}$ imply $\lim_{\,n} f_{n}(Tx_{n})=0$, or equivalently, whenever $T$ carries relatively weakly compact subsets of $X$ onto Dunford-Pettis subsets of $Y$.
\par Recently, H'michane et al. \cite{HKBM} introduced the class of weak$^*$ Dunford-Pettis operators, and characterized this class of operators  and studied some of its properties in \cite{KHBM}. Following H'michane et al. \cite{HKBM} we say a bounded linear operator $T:X\rightarrow Y$  is a \textit{weak$^*$ Dunford-Pettis operator} whenever $x_{n}\xrightarrow {w} 0$ in $X$ and $f_{n}\xrightarrow {w^{*}} 0$ in $Y^{*}$ imply $ f_{n}(Tx_{n})\rightarrow0$, or equivalently, whenever $T$ carries relatively weakly compact subsets of $X$ onto limited subsets of $Y$ (\cite[Theorem 3.2]{KHBM}).

\par Recall that in the literature the domination problem for a class  $\mathcal{C}$ of
operators acting between Banach lattices is stated as follows:
\begin{itemize}
  \item Let $S, T:E\rightarrow F$ be two positive operators between Banach lattices such that $0 \leq S\leq T$. Assume
that $T$ belongs to the class $\mathcal{C}$. Which conditions on $E$ and $F$ do ensure that
$S$ belongs to $\mathcal{C}$?
\end{itemize}
In \cite{Kalton} Kalton and Saab established that a positive operator from $E$ into $F$, dominated by a positive weak Dunford-Pettis operator, must also be weak Dunford-Pettis, whilst they obtained such a result for Dunford-Pettis operators provided the norm of $F$ is order continuous. Later, Wickstead \cite{Wickstead} studied the converse for the Kalton-Saab theorem: every positive operator from $E$ into $F$ dominated by a Dunford-Pettis operator is Dunford-Pettis if and only if $E$ has weakly sequentially continuous lattice operations or $F$ has order continuous norm.

\par Naturally, we come to the case of weak$^*$ Dunford-Pettis operators. The main purpose of this paper is to study the domination problem for positive weak$^*$ Dunford-Pettis operators between Banach lattices.
Let $S:E\rightarrow F$ be a positive operator between Banach lattices $E$ and $F$ such that $F$ is $\sigma$-Dedekind complete. We show that if $S$ is dominated by a positive weak$^{*}$ Dunford-Pettis operator, then $S$ itself is weak$^*$ Dunford-Pettis.

\par Our notions are standard. For the theory of Banach lattices and operators, we refer  the reader to the monographs \cite{AB, M}.

\section{Lattice Properties of Positive Weak$^{*}$ Dunford-Pettis Operators}

\par It should be noted that in a Banach lattice (or in its dual) the lattice operations fail to be weakly (\,resp. weak$^{*}$) sequentially continuous  in general. Let us recall that every disjoint sequence in the solid hull of a relatively weakly compact subset of a Banach lattice $E$ converges weakly to zero (cf. \cite[Theorem 4.34]{AB}). In particular, if $(x_{n})$ is a disjoint, weakly convergent sequence in $E$, then the sequences $(x_{n})$, $(|\,x_{n}|)$, $(x_{n}^{+})$, $(x_{n}^{-})$ all converge weakly to zero. However, from  Example 2.1 of \cite{CCJ} we can see such a property need not be possessed by $w^{*}$-convergent disjoint sequences in the dual space.

\par The following lemma, which deals with disjoint sequences in the dual of a  $\sigma$-Dedekind complete Banach lattice, is due to the authors \cite{CCJ} and is needed in the rest of this paper.
\begin{lemma}[\cite{CCJ}]\label{Lemma 1}
Let $E$ be a $\sigma$-Dedekind complete Banach lattice, and let $(f_{n})$ be a $w^{\,*}$-convergent sequence of $E^{\,*}$. If $(g_n)$ is  a disjoint sequence of $E^{\,*}$ satisfying $|\,g_n|\leq |\,f_n|$ for each $n\in \mathbb{N}$, then the sequences $(g_{n})$, $(|\,g_{n}|)$, $(g_{n}^{\,+})$, $(g_{n}^{\,-})$ all weak$^{\,*}$ converge to zero. In particular, if $(f_n)$ is a disjoint $w^*$-convergent sequence in its own right, then the sequences $(f_{n})$, $(|\,f_{n}|)$, $(f_{n}^{\,+})$, $(f_{n}^{\,-})$ are all weak$^*$ null.
\end{lemma}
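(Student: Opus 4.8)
The plan is to establish the first (general) assertion; the ``in particular'' clause is then just the special case $g_n=f_n$ (a disjoint sequence trivially satisfies $|g_n|\le|f_n|$ with $f_n=g_n$).

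\emph{Step 1: reduction to the modulus, plus an elementary remark.} Since $0\le g_n^{\,\pm}\le|g_n|$ and positive functionals are monotone, it is enough to prove that $|g_n|(x)\to0$ for every $x\in E^{\,+}$: this forces $g_n^{\,\pm}(x)\to0$ on $E^{\,+}$, and then splitting an arbitrary $y\in E$ as $y^{+}-y^{-}$ gives $g_n(y)\to0$, $g_n^{\,\pm}(y)\to0$ and $|g_n|(y)\to0$. I will use repeatedly the following easy fact: \emph{if $(h_n)$ is a disjoint sequence in $(E^{*})^{+}$ dominated by a single $h\in(E^{*})^{+}$, then $h_n\xrightarrow{w^{*}}0$}; indeed, for $x\in E^{+}$ the partial sums satisfy $\sum_{k=1}^{N}h_k=\bigvee_{k=1}^{N}h_k\le h$, so $\sum_{k=1}^{\infty}h_k(x)\le h(x)<\infty$ and hence $h_k(x)\to0$.

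\emph{Step 2: peeling off the weak$^{*}$ limit.} Let $f$ be the weak$^{*}$ limit of $(f_n)$. Split
\[
|g_n|=\bigl(|g_n|\wedge|f|\bigr)+\bigl(|g_n|-|f|\bigr)^{+}.
\]
Each summand is again a disjoint sequence, since its $n$-th term lies below $|g_n|$. The first sequence is dominated by the fixed functional $|f|$, hence is weak$^{*}$ null by Step 1. For the second, $|g_n|\le|f_n|\le|f_n-f|+|f|$ yields $\bigl(|g_n|-|f|\bigr)^{+}\le|f_n-f|$, and $(f_n-f)$ is weak$^{*}$ null. Thus everything reduces to the case $f=0$: it remains to prove that a disjoint sequence $(h_n)$ in $(E^{*})^{+}$ with $h_n\le|f_n|$ and $f_n\xrightarrow{w^{*}}0$ is weak$^{*}$ null.

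\emph{Step 3: the core case, which is the main obstacle.} Fix $x\in E^{\,+}$ and assume, for contradiction, that after passing to a subsequence $h_n(x)\ge\varepsilon$ for all $n$. This is where $\sigma$-Dedekind completeness of $E$ enters decisively: it gives $E$ the principal projection property, so for each $u\in E^{\,+}$ the band projection onto the band generated by $u$ exists in $E$. Using this I would run a gliding-hump/exhaustion argument: choose inductively indices $n_1<n_2<\cdots$ together with a disjoint sequence of components $x_1,x_2,\dots$ of the \emph{fixed} element $x$ so that $h_{n_k}(x_k)$ stays bounded away from $0$ while, at each stage, the already-used component carries only a negligible portion of the mass of $|f_{n_k}|$; evaluating the weak$^{*}$ null sequence $(f_{n_k})$ at the $x_k$ and at suitable partial sums $x_1+\dots+x_m$ then produces a contradiction. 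The genuinely delicate point is the simultaneous control, throughout this construction, of the lower bound for $h_{n_k}(x_k)$ and of the cross-terms arising from the splittings of $x$; and $\sigma$-Dedekind completeness cannot be dispensed with, for in $C[0,1]$, say, the disjoint sequence $(\delta_{1/n})$ satisfies $\delta_{1/n}\le|\delta_{1/n}-\delta_{1/(n+1)}|$ with $(\delta_{1/n}-\delta_{1/(n+1)})$ weak$^{*}$ null, yet $(\delta_{1/n})$ is not weak$^{*}$ null.
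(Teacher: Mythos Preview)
The paper itself does \emph{not} prove this lemma; it is quoted from \cite{CCJ} and no argument is supplied here, so there is nothing in the present paper to compare your attempt against line by line.

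Assessing the proposal on its own: Steps~1 and~2 are correct and efficient reductions. Step~3, however, is not a proof but a plan. You write that you ``would run a gliding-hump/exhaustion argument'' and then immediately concede that ``the genuinely delicate point is the simultaneous control, throughout this construction, of the lower bound for $h_{n_k}(x_k)$ and of the cross-terms,'' without carrying out that control. Nothing in your text explains how the principal projection property actually produces the disjoint components $x_k$ of $x$ with $h_{n_k}(x_k)$ bounded below, nor how the weak$^{*}$-nullity of $(f_{n_k})$ is brought to bear to obtain a contradiction. As written, Step~3 is a description of the difficulty, not its resolution; the core of the lemma is left open.

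There is a much shorter route that avoids the gliding hump entirely and fits naturally with the tools already used in this paper. By Burkinshaw's theorem (invoked later in the proof of Theorem~\ref{Theorem 4}; see \cite{Burk} or \cite[Theorem~4.42]{AB}), $\sigma$-Dedekind completeness of $E$ together with $f_n\xrightarrow{w^{*}}0$ guarantees, for each fixed $x\in E^{+}$ and $\varepsilon>0$, a single $0\le g\in E^{*}$ with $(|f_n|-g)^{+}(x)<\varepsilon$ for all $n$. Now split
\[
h_n=(h_n\wedge g)+(h_n-g)^{+}.
\]
The first summand is a disjoint sequence dominated by the fixed functional $g$, hence weak$^{*}$ null by exactly your ``elementary remark'' in Step~1; the second satisfies $(h_n-g)^{+}\le(|f_n|-g)^{+}$, so its value at $x$ is below $\varepsilon$. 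Thus $\limsup_n h_n(x)\le\varepsilon$, and letting $\varepsilon\downarrow0$ finishes the job. This replaces your unfinished Step~3 by a two-line computation once the right structural input is identified.
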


\par Let $T:E\rightarrow F$ be a positive weak$^*$ Dunford-Pettis operator between Banach lattices. For every  weakly null sequence $(x_n)$ in $E^{\,+}$ and every weak$^*$ null sequence $(f_n)$  in $F^{\,*}$, by the definition of weak$^*$ Dunford-Pettis operators  we have $f_n(Tx_n)\rightarrow 0$. Indeed we can say more when $F$ is $\sigma$-Dedekind complete.

\begin{theorem}\label{Theorem 2}
Let $T:E\rightarrow F$ be a positive weak$^*$ Dunford-Pettis operator between Banach lattices $E$  and $F$ with $F$ $\sigma$-Dedekind complete.
 Then for every weakly null sequence $(x_n)$ in $E^{\,+}$ and every weak$^*$ null sequence $(f_n)$  in $F^{\,*}$ we have $|f_n|(Tx_n)\rightarrow 0\quad (n\rightarrow\infty).$
\end{theorem}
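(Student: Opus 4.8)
The plan is to argue by contradiction. Suppose the conclusion fails; then, after passing to a subsequence, there is $\varepsilon>0$ with $|f_n|(Tx_n)\ge\varepsilon$ for every $n$. Put $y_n:=Tx_n\in F^{+}$. Since $(x_n)$ is weakly null, the set $\{x_n\}\cup\{0\}$ is weakly compact in $E$, and therefore $A:=\{y_n:n\in\mathbb{N}\}$ is a limited subset of $F$ (this is precisely the hypothesis that $T$ is weak$^{*}$ Dunford--Pettis), while $y_n\xrightarrow{w}0$. Using the identity $|f_n|(y_n)=\sup\{|f_n(z)|:z\in F,\ |z|\le y_n\}$ one may also fix, for each $n$, an element $z_n$ with $|z_n|\le y_n$ and $|f_n(z_n)|\ge\varepsilon/2$; replacing $z_n$ by $z_n^{+}$ or $z_n^{-}$ and, if necessary, $f_n$ by $-f_n$ (which keeps $(f_n)$ weak$^{*}$ null), we may assume $0\le z_n\le y_n$ and $f_n(z_n)\ge\varepsilon/4$.

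The main step is to construct, after passing to a further subsequence, a \emph{disjoint} sequence $(g_n)$ in $(F^{*})^{+}$ such that $g_n\le|f_n|$ for every $n$ and $g_n(y_n)\ge\delta$ for some fixed $\delta>0$. This is where the $\sigma$-Dedekind completeness of $F$ is used: it provides principal band projections in $F$, while $F^{*}$ is Dedekind complete in any case. The construction should be inductive. Having chosen $n_1<\dots<n_{k-1}$ and pairwise disjoint $g_1,\dots,g_{k-1}$, set $\psi:=g_1+\dots+g_{k-1}\in (F^{*})^{+}$. Because $\psi$ is fixed and $y_n\xrightarrow{w}0$, we have $\psi(y_n)\to0$, and likewise $|f_{n_j}|(y_n)=(T^{*}|f_{n_j}|)(x_n)\to0$ as $n\to\infty$ for each $j<k$; so one selects $n_k$ large and then splits off from $|f_{n_k}|$, by means of a band projection in $F^{*}$ (and using $z_{n_k}$ to keep track of where the action of $|f_{n_k}|$ sits), a positive component that is disjoint from $\psi$ yet still carries mass at least $\delta$ on $y_{n_k}$. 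I expect this disjointification to be the crux and the main difficulty: the familiar device of throwing away a small norm-tail is not directly available, since $(|f_n|)$ itself need not be weak$^{*}$ null, so one must genuinely use the weak nullity of $(Tx_n)$ — not merely the boundedness of $(f_n)$ — to ensure that the part of $|f_{n_k}|$ one is forced to discard does not already absorb almost all of $|f_{n_k}|(y_{n_k})$.

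Granting such a sequence $(g_n)$, the proof finishes quickly. Apply Lemma~\ref{Lemma 1} to the $\sigma$-Dedekind complete lattice $F$, the weak$^{*}$ null sequence $(f_n)$ and the disjoint sequence $(g_n)$ with $|g_n|=g_n\le|f_n|$: it gives that $(g_n)$ (and $(|g_n|)$) is weak$^{*}$ null in $F^{*}$. Since $A=\{y_n\}$ is a limited set, every weak$^{*}$ null sequence converges to zero uniformly on $A$, so $g_n(y_n)\le\sup_{m\in\mathbb{N}}|g_n(y_m)|\to0$, contradicting $g_n(y_n)\ge\delta>0$. Hence no such $\varepsilon$ can exist, i.e. $|f_n|(Tx_n)\to0$, which is the assertion.
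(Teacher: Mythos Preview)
Your overall architecture is the paper's: assume by contradiction that $|f_n|(Tx_n)\ge\varepsilon$ along a subsequence, manufacture a disjoint sequence $(g_k)$ with $0\le g_k\le|f_{n_k}|$ and $g_k(Tx_{n_k})$ bounded away from zero, invoke Lemma~\ref{Lemma 1} to get $g_k\xrightarrow{w^*}0$, and conclude from limitedness of $\{Tx_n\}$ (equivalently, the weak$^*$ Dunford--Pettis property of $T$). Your observation that $|f_{n_j}|(y_n)=(T^*|f_{n_j}|)(x_n)\to0$ for each fixed $j$ is exactly the right fuel for the induction. The auxiliary elements $z_n$, however, never get used and can be dropped.

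The genuine gap is the disjointification itself. Projecting $|f_{n_k}|$ onto the band $\{\psi\}^d$ in $F^*$ cannot work in general: if $\psi=g_1=|f_{n_1}|$ happens to be a weak order unit of $F^*$ (nothing rules this out), the complementary band is $\{0\}$ and your $g_2$ vanishes no matter how large you choose $n_2$. More generally, the smallness of $\psi(y_{n_k})$ gives no control over the component of $|f_{n_k}|$ in the band generated by $\psi$, since that component may vastly exceed $\psi$. What does work is the standard device of \cite[Lemma~4.35]{AB}: having chosen $n_1<\dots<n_k$, pick $n_{k+1}$ so large that $4^k\sum_{i\le k}|f_{n_i}|(y_{n_{k+1}})<\varepsilon/2$ (possible precisely because of your observation), and set $\tilde f_{k+1}=\bigl(|f_{n_{k+1}}|-4^k\sum_{i\le k}|f_{n_i}|-2^{-k}f\bigr)^{+}$ with $f=\sum_j 2^{-j}|f_{n_j}|$; this sequence is disjoint, dominated by $|f_{n_{k+1}}|$, and retains the lower bound up to the harmless correction $2^{-k}f(y_{n_{k+1}})\to0$. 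The paper packages the same construction slightly differently, first proving the intermediate claim that for every $\varepsilon>0$ there exist $g\ge0$ in $F^*$ and $N$ with $(|f_n|-g)^{+}(Tx_n)<\varepsilon$ for all $n>N$, and then finishing via $|f_n|(Tx_n)\le(|f_n|-g)^{+}(Tx_n)+g(Tx_n)$.
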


\begin{proof}
Let $\varepsilon>0$ be arbitrary. First, we claim that there exists $0\leq g\in F^{*}$ and $N\in \mathbb{N}$ such that
$$ (|f_n|-g)^{+}(Tx_n)<\varepsilon\eqno{(\ast)}$$ holds for all $n>N$. Suppose that $(\ast)$ is false. Then there exists an $\varepsilon^{\,\prime}>0$ such that for each
$0\leq g\in F^{*}$ and each $N\in \mathbb{N}$ we have $ (|f_k|-g)^{+}(Tx_k)\geq\varepsilon^{\,\prime}$ for at least one  $k>N$. Let us put $g=4|f_1|$ and $n_1=1$. Thus there exists a natural number $n_2 \,(>n_1)$ satisfying $$(|f_{n_2}|-4|f_1|)^{+}(Tx_{n_2})\geq\varepsilon^{\,\prime}.$$Also, let us put $g=4^{2}\sum_{i=1}^{2}|f_{n_{i}}|$. Then, we have $$\left(|f_{n_{3}}|-4^{2}\sum_{i=1}^{2}|f_{n_{i}}|\right)^{+}(Tx_{n_{3}})\geq\varepsilon^{\,\prime}$$ for some natural number $n_3 \,(>n_2).$ Proceeding with an inductive argument we can obtain a strictly increasing subsequence $(n_k)$ of $\mathbb{N}$ such that $$\left(|f_{n_{k+1}}|-4^{k}\sum_{i=1}^{k}|f_{n_{i}}|\right)^{+}(Tx_{n_{k+1}})\geq\varepsilon^{\,\prime}$$for all $k\in \mathbb{N}$. Let $f=\sum_{k=1}^{\infty}2^{-k}|f_{n_{k}}|$, and put
$$g_{k+1}=\left(|f_{n_{k+1}}|-4^{k}\sum_{i=1}^{k}|f_{n_{i}}|\right)^{+},\qquad \tilde{f}_{k+1}=\left(|f_{n_{k+1}}|-4^{k}\sum_{i=1}^{k}|f_{n_{i}}|-2^{-k}f\right)^{+}.$$ Note that
$0\leq g_{k+1}\leq \tilde{f}_{k+1}+2^{-k}f$ and $g_{k+1}(Tx_{n_{k+1}})\geq\varepsilon^{\,\prime}$ for any $k\in \mathbb{N}$. By Lemma 4.35 of \cite{AB} $(\tilde{f}_{k+1})$ is a disjoint sequence. Since $0\leq \tilde{f}_{k+1}\leq|f_{n_{k+1}}|$ and $f_{n_{k+1}}\xrightarrow {w^{*}} 0$, in view of Lemma \ref{Lemma 1} we have $\tilde{f}_{k+1}\xrightarrow {w^{*}} 0$ in $F^{\,*}$. From the weak$^*$ Dunford-Pettis property of $T$ it follows that $\tilde{f}_{k+1}(Tx_{n_{k+1}})\rightarrow0$. However,
\begin{eqnarray*}
0<\varepsilon^{\,\prime}\leq g_{k+1}(Tx_{n_{k+1}})&\leq &(\tilde{f}_{k+1}+2^{-k}f)(Tx_{n_{k+1}})\\&=&\tilde{f}_{k+1}(Tx_{n_{k+1}})+2^{-k}f(Tx_{n_{k+1}})\rightarrow0.
\end{eqnarray*}This leads to a contradiction. Hence, $(\ast)$ is true.
\par Now Let $0\leq g\in F^{*}$ and $N\in \mathbb{N}$ satisfy $(\ast)$. For all $n>N$, we have the inequalities
\begin{eqnarray*}
 |f_n|(Tx_n)=(|f_n|-g)^{+}(Tx_n)+(|f_n|\wedge g)(Tx_n)&\leq&(|f_n|-g)^{+}(Tx_n)+ g(Tx_n)\\&\leq&\varepsilon+g(Tx_n).
\end{eqnarray*}Because $x_{n}\xrightarrow {w} 0$ in $E$, it follows that $\limsup|f_n|(Tx_n)\leq\varepsilon$. Since $\varepsilon>0$ is arbitrary, we have
$\lim_{n}|f_n|(Tx_n)\rightarrow 0,$ as desired.
\end{proof}

\par The next theorem describes an important approximation property of positive weak$^*$ Dunford-Pettis operators.
\begin{theorem}\label{Theorem 3}
Let $T:E\rightarrow F$ be a positive weak$^*$ Dunford-Pettis operator between Banach lattices $E$  and $F$ with $F$ $\sigma$-Dedekind complete.
Let $W$ be a relatively weakly compact subset of $E$ and $(f_n)$ be a weak$^*$ null sequence in $F^*$. Then, for any $\varepsilon>0$ there exists some $N\in \mathbb{N}$ and some $u\in E^{+}$ lying in the ideal generated by $W$ such that $$|f_n|\left(T(|x|-u)^{+}\right)<\varepsilon$$ for all $n>N$ and all $x\in W$.
\end{theorem}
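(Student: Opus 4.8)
The plan is to argue by contradiction, closely following the proof of Theorem \ref{Theorem 2}, but with the gliding-hump construction carried out in $E$ rather than in $F^{*}$. The role of Lemma \ref{Lemma 1} (a disjoint sequence dominated by a $w^{*}$-null sequence is $w^{*}$-null) will now be played by the fact that a disjoint sequence in the solid hull of a relatively weakly compact set is weakly null (\cite[Theorem 4.34]{AB}), and the role of the weak$^{*}$ Dunford-Pettis property of $T$ will be played by Theorem \ref{Theorem 2} itself.

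Suppose the statement fails: there is $\varepsilon_{0}>0$ such that for every $N\in\mathbb N$ and every $u\in E^{+}$ in the ideal generated by $W$ there are $n>N$ and $x\in W$ with $|f_{n}|\big(T(|x|-u)^{+}\big)\ge\varepsilon_{0}$. First I would build, by induction, a strictly increasing sequence $(n_{k})$ in $\mathbb N$ and elements $x_{k}\in W$: put $n_{1}=1$ and pick any $x_{1}\in W$; given $n_{1}<\dots<n_{k}$ and $x_{1},\dots,x_{k}$, observe that $4^{k}\sum_{i=1}^{k}|x_{i}|$ is a positive element of the ideal generated by $W$, so the failed conclusion (applied with $N=n_{k}$ and $u=4^{k}\sum_{i=1}^{k}|x_{i}|$) produces $n_{k+1}>n_{k}$ and $x_{k+1}\in W$ with
$$|f_{n_{k+1}}|\Big(T\big(|x_{k+1}|-4^{k}\textstyle\sum_{i=1}^{k}|x_{i}|\big)^{+}\Big)\ \ge\ \varepsilon_{0}.$$

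Next I would set $z:=\sum_{k=1}^{\infty}2^{-k}|x_{k}|$ (the series converges in norm since $W$ is bounded) and put
$$y_{k}:=\big(|x_{k+1}|-4^{k}\textstyle\sum_{i=1}^{k}|x_{i}|\big)^{+},\qquad z_{k}:=\big(|x_{k+1}|-4^{k}\textstyle\sum_{i=1}^{k}|x_{i}|-2^{-k}z\big)^{+}.$$
By Lemma 4.35 of \cite{AB} the sequence $(z_{k})$ is disjoint; since $0\le z_{k}\le|x_{k+1}|$ with $x_{k+1}\in W$, each $z_{k}$ belongs to $sol(W)$, hence $z_{k}\xrightarrow{w}0$ by \cite[Theorem 4.34]{AB}. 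As $(z_{k})\subseteq E^{+}$ is weakly null and $(f_{n_{k+1}})$ is weak$^{*}$ null, Theorem \ref{Theorem 2} gives $|f_{n_{k+1}}|(Tz_{k})\to 0$. Now the elementary inequality $t^{+}\le (t-c)^{+}+c$ (valid for $c\ge 0$) yields $0\le y_{k}\le z_{k}+2^{-k}z$, so
$$\varepsilon_{0}\ \le\ |f_{n_{k+1}}|(Ty_{k})\ \le\ |f_{n_{k+1}}|(Tz_{k})+2^{-k}|f_{n_{k+1}}|(Tz)\ \le\ |f_{n_{k+1}}|(Tz_{k})+2^{-k}\big(\sup_{m}\|f_{m}\|\big)\|Tz\|,$$
and the right-hand side tends to $0$, which is the desired contradiction.

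The main obstacle is calibrating the two correction terms. The term $4^{k}\sum_{i=1}^{k}|x_{i}|$ must remain inside the ideal generated by $W$ so that the negated conclusion can be invoked at each stage, while the additional term $2^{-k}z$ must be small enough to make $(z_{k})$ genuinely disjoint (this is exactly what Lemma 4.35 of \cite{AB} provides) and yet mild enough that $z_{k}$ still lies in $sol(W)$, so that Theorem \ref{Theorem 2} applies to the weakly null sequence $(z_{k})$. The remaining points are routine; in particular the norm-boundedness of $(f_{n})$, needed to make $2^{-k}|f_{n_{k+1}}|(Tz)\to 0$, holds automatically because a weak$^{*}$ convergent sequence is norm bounded.
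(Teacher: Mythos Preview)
Your proof is correct and follows essentially the same route as the paper's own argument: the same contradiction setup, the same inductive gliding-hump construction with the $4^{k}\sum|x_{i}|$ and $2^{-k}z$ corrections, the same appeal to Lemma 4.35 of \cite{AB} for disjointness, to \cite[Theorem 4.34]{AB} for weak nullness, and to Theorem \ref{Theorem 2} to force the contradiction. The only differences are cosmetic (variable names, an index shift, and your slightly more explicit justification of why $2^{-k}|f_{n_{k+1}}|(Tz)\to 0$).
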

\begin{proof}
Assume by way of contradiction that the claim is false. That is, there exists an $\varepsilon^{\,\prime}>0$ such that for each $N\in \mathbb{N}$ and each $u\geq0$ in the ideal generated by $W$ we can find a natural number $m>N$ and some $x_m\in W$ satisfying $|f_m|(T(|x_m|-u)^{+})\geq\varepsilon^{\,\prime}$.
Hence, by an easy inductive argument we can choose a strictly increasing subsequence $(n_k)$ of $\mathbb{N}$ and a sequence $(x_k)\subseteq W$ such that
$$|f_{n_{k+1}}|\left(T\left(|x_{k+1}|-4^{k}\sum_{i=1}^{k}|x_i|\right)^{+}\right)\geq\varepsilon^{\,\prime}>0\eqno{(\ast\ast)}$$Let $x=\sum_{k=1}^{\infty}2^{-k}|x_{k}|$. Also put$$w_{k+1}=\left(|x_{k+1}|-4^{k}\sum_{i=1}^{k}|x_i|\right)^{+},\qquad v_{k+1}=\left(|x_{k+1}|-4^{k}\sum_{i=1}^{k}|x_i|-2^{-k}x\right)^{+}. $$Clearly, $|f_{n_{k+1}}|(Tw_{k+1})\geq\varepsilon^{\,\prime}>0$ and $0\leq w_{k+1}\leq v_{k+1}+2^{-k}x$ hold for all $k\in\mathbb{ N}$. By Lemma 4.35 of \cite{AB} $(v_{k+1})$ is a disjoint sequence in $E$. Note that $0\leq v_{k+1}\leq |x_{k+1}|$. It follows that $v_{k+1}\in sol(W)$ holds for all $k$. Since every disjoint sequence in the solid hull of a relatively weakly compact set of a Banach lattice
converges weakly to zero (cf. \cite[Theorem 4.34]{AB}), we see that $v_{k+1}\xrightarrow {w} 0$ in $E$. Hence, from Theorem \ref{Theorem 2} it follows that $|f_{n_{k+1}}|(Tv_{k+1})\rightarrow0.$ On the other hand,
\begin{eqnarray*}
 0<\varepsilon^{\,\prime}\leq|f_{n_{k+1}}|(Tw_{k+1})&\leq&|f_{n_{k+1}}|\left(T(v_{k+1}+2^{-k}x)\right)
 \\&=&|f_{n_{k+1}}|(Tv_{k+1})+2^{-k}|f_{n_{k+1}}|(Tx)\rightarrow0,
\end{eqnarray*}which contradicts $(\ast\ast)$. Therefore, the proof is completed.
\end{proof}
\section{Domination by Positive Weak$^{*}$ Dunford-Pettis Operators}
\par Let us recall that a Banach space $X$ is said to be a \textit{Gelfand-Phillips space} whenever all limited sets in $X$ are relatively compact. It is well known that all separable Banach spaces and all weakly compactly generated spaces are Gelfand-Phillips spaces. Note that a $\sigma$-Dedekind complete Banach lattice $E$ is a Gelfand-Phillips space if and only if the norm of $E$ is order continuous (cf. \cite{B}).  $X$ has the\textit{ Dunford-Pettis property }(resp. the \textit{Dunford-Pettis$^{*}$ property})  whenever every relatively weakly compact set in $X$ is a Dunford-Pettis set (resp. a limited set), in other words, for each weakly null sequence $(x_{n})$ in $X$ and each weakly null sequence (resp. weak$^{*}$ null sequence) $(f_{n})$ in $X^{*}$, $\lim_{\,n} f_{n}(x_{n})=0$. The Dunford-Pettis$^{*}$ property, introduced first by Borwein, Fabian and Vanderwerff \cite{BFV}, is stronger than the Dunford-Pettis property. Carri\'{o}n, Galindo and Louren\c{c}o \cite{CGL} showed that $X$ has the Dunford-Pettis$^{*}$ property if, and only if, every bounded linear operator $T:X\rightarrow c_{0}$ is a Dunford-Pettis operator.
\par It should be noted that if either $X$ or $Y$ has the Dunford-Pettis$^{*}$ property, then every bounded linear operator from $X$ into $Y$ is weak$^*$ Dunford-Pettis. Also, If $Y$ is a Gelfand-Phillips space,  weak$^*$ Dunford-Pettis operators from $X$ into $Y$  and Dunford-Pettis operators between them coincide.
\par Now, since $L_{1}[0,\,1]$ does not have weakly sequentially continuous lattice operations and the norm of $c$ is not order continuous, by the converse for the Kalton-Saab theorem proved by Wickstead \cite{Wickstead} we know that there exists a positive operator from $L_{1}[0,\,1]$ into $c$, which is dominated by a Dunford-Pettis operator, is not Dunford-Pettis. On the other hand, since $c$ is a Gelfand-Phillips space, every weak$^*$ Dunford-Pettis operators from $L_{1}[0,\,1]$ into $c$ is Dunford-Pettis. Therefore,  there exists a positive operator from $L_{1}[0,\,1]$ into $c$ dominated by a weak$^{*}$ Dunford-Pettis operator is not weak$^{*}$ Dunford-Pettis. It should be noted that $c$ is not $\sigma$-Dedekind complete. In case the range space is $\sigma$-Dedekind complete, we have the following  domination result for positive weak$^*$ Dunford-Pettis operators.
\begin{theorem}\label{Theorem 4}
Let $E$, $F$  be two Banach lattices such that $F$ is $\sigma$-Dedekind complete. If a positive operator $S:E\rightarrow F$ is dominated by
 a positive weak$^*$ Dunford-Pettis operator, then $S$ itself is weak$^*$ Dunford-Pettis.
\end{theorem}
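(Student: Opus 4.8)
The plan is to verify the sequential form of the weak$^{*}$ Dunford--Pettis property for $S$ directly: given a weakly null sequence $(x_n)$ in $E$ and a weak$^{*}$ null sequence $(f_n)$ in $F^{*}$, one must show $f_n(Sx_n)\to 0$. I would argue by contradiction. After passing to a subsequence, assume $|f_n(Sx_n)|\ge\varepsilon_0>0$ for all $n$; since $f_n(Sx_n)=f_n(Sx_n^{+})-f_n(Sx_n^{-})$, a pigeonhole argument together with a further passage to a subsequence lets me assume in fact that $|f_n(Sx_n^{+})|\ge\varepsilon_0/2$ for all $n$. Put $y_n:=x_n^{+}\in E^{+}$; then $(y_n)\subseteq sol(W)$, where $W:=\{x_n:n\in\mathbb N\}\cup\{0\}$ is relatively weakly compact.

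The first real step is to localise by means of Theorem \ref{Theorem 3}. Applied to $W$, $(f_n)$ and $\varepsilon_0/4$, it produces $N\in\mathbb N$ and $u\in E^{+}$, lying in the ideal generated by $W$, with $|f_n|\bigl(T(|x_n|-u)^{+}\bigr)<\varepsilon_0/4$ for all $n>N$. Because $0\le(y_n-u)^{+}\le(|x_n|-u)^{+}$ and $0\le S\le T$, this gives $\bigl|f_n\bigl(S(y_n-u)^{+}\bigr)\bigr|\le|f_n|\bigl(T(|x_n|-u)^{+}\bigr)<\varepsilon_0/4$, hence $\bigl|f_n\bigl(S(y_n\wedge u)\bigr)\bigr|\ge\varepsilon_0/4$ for $n>N$. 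Relabelling, I may assume henceforth that $(y_n)$ is a positive sequence with $0\le y_n\le u$, lying in $sol(W)$, such that $|f_n(Sy_n)|\ge\delta_0:=\varepsilon_0/4$ for all $n$.

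The heart of the argument is then a gliding-hump construction modelled on the proofs of Theorems \ref{Theorem 2} and \ref{Theorem 3}. I claim that for every $\varepsilon>0$ there exist $v\in E^{+}$ in the ideal generated by $W$ and $M\in\mathbb N$ with $\bigl|f_n\bigl(S(y_n-v)^{+}\bigr)\bigr|<\varepsilon$ for all $n>M$. If this were false for some $\varepsilon'>0$, then choosing successively $v=4\,y_{n_1}$, $v=4^{2}\sum_{i\le 2}y_{n_i}$, and so on, I could extract a strictly increasing sequence $(n_k)$ with $\bigl|f_{n_{k+1}}(Sq_{k+1})\bigr|\ge\varepsilon'$, where $q_{k+1}:=\bigl(y_{n_{k+1}}-4^{k}\sum_{i=1}^{k}y_{n_i}\bigr)^{+}$. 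Putting $y:=\sum_{k}2^{-k}y_{n_k}$ and $w_{k+1}:=\bigl(y_{n_{k+1}}-4^{k}\sum_{i\le k}y_{n_i}-2^{-k}y\bigr)^{+}$, Lemma 4.35 of \cite{AB} shows that $(w_{k+1})$ is disjoint; since $0\le w_{k+1}\le y_{n_{k+1}}\le|x_{n_{k+1}}|$ this disjoint sequence lies in $sol(W)$, so $w_{k+1}\xrightarrow{w}0$ by \cite[Theorem 4.34]{AB}, and Theorem \ref{Theorem 2} gives $|f_{n_{k+1}}|(Tw_{k+1})\to 0$, whence $|f_{n_{k+1}}(Sw_{k+1})|\le|f_{n_{k+1}}|(Sw_{k+1})\le|f_{n_{k+1}}|(Tw_{k+1})\to 0$. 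As $0\le q_{k+1}\le w_{k+1}+2^{-k}y$ and $\sup_k\|f_{n_k}\|<\infty$, one obtains
\[
0<\varepsilon'\le\bigl|f_{n_{k+1}}(Sq_{k+1})\bigr|\le\bigl|f_{n_{k+1}}(Sw_{k+1})\bigr|+2^{-k}|f_{n_{k+1}}|(Sy)\longrightarrow 0,
\]
a contradiction; so the claim holds.

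Finally I would close the argument using this claim together with the $\sigma$-Dedekind completeness of $F$. Fixing $\varepsilon=\delta_0/2$ and writing $y_n=y_n\wedge v+(y_n-v)^{+}$ gives $|f_n(S(y_n\wedge v))|\ge\delta_0/2$ for $n$ large, where $(y_n\wedge v)$ is again a positive sequence in $sol(W)$, now order bounded by the fixed element $v$; iterating the claim with summable thresholds produces a decreasing chain of ``order tails'' $v\ge v\wedge v'\ge\cdots$ along which $|f_n(S(\,\cdot\,))|$ stays bounded below, and the contradiction with $|f_n(Sy_n)|\ge\delta_0$ is extracted from the order limit of the corresponding decreasing sequences inside the order interval $[0,Su]\subseteq F$, which exists precisely because $F$ is $\sigma$-Dedekind complete. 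I expect this last step to be the main obstacle: the gliding-hump mechanism together with Theorem \ref{Theorem 2} only disposes of the ``disjoint part'' of $(y_n)$, and one must still rule out that the order-bounded residual carries the mass; this is exactly where $\sigma$-Dedekind completeness of $F$ is genuinely used, consistent with the fact that the conclusion fails for $F=c$.
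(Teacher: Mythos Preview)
Your localisation step via Theorem~\ref{Theorem 3} is correct and is exactly how the paper begins. The difficulty is everything that follows.

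First, notice that once you have relabelled so that $0\le y_n\le u$ with $u$ in the ideal generated by $W$, your gliding-hump ``claim'' is vacuous: taking $v=u$ gives $(y_n-v)^{+}=0$, so $|f_n(S(y_n-v)^{+})|=0<\varepsilon$ for every $n$. The whole inductive construction of $(q_{k+1})$ and $(w_{k+1})$ therefore never gets off the ground; it adds nothing beyond the first localisation. The real task, as you yourself identify, is to show $f_n(Sy_n)\to 0$ for an order-bounded positive sequence $0\le y_n\le u$ with $(f_n)$ weak$^{*}$ null, and your final paragraph does not do this. Iterating the claim with summable thresholds and passing to an order limit in $[0,Su]$ does not yield a contradiction: an order limit $z_n$ of a decreasing sequence in $[0,Su]$ exists by $\sigma$-Dedekind completeness, but there is no mechanism relating $f_n(z_n)$ to the tail behaviour, since the $f_n$ are merely $w^{*}$ null and not uniformly convergent on order intervals. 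The argument, as written, is a sketch of an obstacle rather than a proof.

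What is missing is precisely the Kalton--Saab approximation machinery. The paper proceeds by working on the closed ideal $\bar A_e$ generated by $e=\sum 2^{-n}|x_n|$ and invoking \cite[Theorem~4.82]{AB}: given $u$ and a suitable $0\le g\in F^{*}$ (found via \cite[Theorem~4.42]{AB}), there exist positive multiplication operators $M_1,\dots,M_k$ on $\bar A_e$ and order projections $P_1,\dots,P_k$ on $F^{**}$ with $\langle g,|S-\sum_i P_iTM_i|\,u\rangle<\varepsilon$ and $0\le\sum_i P_iTM_i\le T$. This reduces the problem to showing $\langle f_n,P_iTM_ix_n\rangle\to 0$ for each $i$. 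Since $M_ix_n\xrightarrow{w}0$ and $T$ is weak$^{*}$ Dunford--Pettis, this follows once one checks that the map $Q_i\colon F^{*}\to F^{*}$, $(Q_if)(y)=\langle f,P_iy\rangle$, is sequentially $w^{*}$-continuous. That in turn holds because $0\le P_iy\le y$ forces $P_iy$ into the ideal $A_F$ generated by $F$ in $F^{**}$, and $\sigma$-Dedekind completeness of $F$ upgrades $w^{*}$-convergence to $\sigma(F^{*},A_F)$-convergence by \cite[Theorem~4.43]{AB}. This last point is where $\sigma$-Dedekind completeness is genuinely used, and it is a structural fact about $F^{*}$ and $F^{**}$ that your order-limit idea in $F$ cannot substitute for.
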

\begin{proof}
We shall follow the plan of N. J. Kalton and P. Saab in their weak Dunford-Pettis version, but we have to make efforts to overcome some obstacles on our way since the behavior of weak$^*$ convergence is quite different from that of weak convergence in general.
\par Assume that $F$ is $\sigma$-Dedekind complete and that $T:E\rightarrow F$ is a positive weak$^*$ Dunford-Pettis operator satisfying $0\leq S\leq T$.
Let $x_n\xrightarrow {w}0$ in $E$, and let $f_{n}\xrightarrow {w^{*}} 0$ in $F^{*}$. To prove that $S$ is weak$^*$ Dunford-Pettis, we have to show that $f_{n}(Sx_n)\rightarrow0.$ To this end, put $e=\sum_{n=1}^{\infty}2^{-n}|x_n|\in E^+$, and let $A_e$ be the ideal generated in $E$ by $e$. Consider the operators $0\leq S \leq T:\bar{A_e}\rightarrow F$, where $\bar{A_e}$ is the norm closure of $A_e$ in $E$.
Clearly, $x_n\xrightarrow {w}0$ in $\bar{A_e}$,  and $T:\bar{A_e}\rightarrow F$ is likewise weak$^*$ Dunford-Pettis. Let $\varepsilon>0$ be fixed. By Theorem \ref{Theorem 3} there exist some $N_1\in \mathbb{N}$ and some $u\in E^{+}$ lying in the ideal generated by $(x_n)$ such that $$|f_n|\left(T(|x_n|-u)^{+}\right)<\varepsilon$$ for all $n>N_1$. Note that $u\in A_{e}$. Since $F$ is $\sigma$-Dedekind complete and $f_{n}\xrightarrow {w^{*}} 0$ in $F^{*}$,  there exists $0\leq g \in F^{\,*}$ lying in the ideal generated by $(f_{n})$ in $F^*$ such that $$(|\,f_n|-g)^{+}(Tu)<\varepsilon$$holds for all $n\in\mathbb{N}$ (\,\cite{Burk}; cf. \cite[Theorem 4.42]{AB}).
\par On the other hand,
By Theorem 4.82 of \cite{AB} there exist positive operators $M_1,\cdot\cdot\cdot,M_k$ on $\bar{A_e}$ and order projections $P_1,\cdot\cdot\cdot,P_k$ on $F^{**}$ satisfying $$\left<g,\,\left|S-\sum_{i=1}^{k}P_{i}TM_i\right|u\right><\varepsilon \qquad \textrm{and} \qquad 0\leq\sum_{i=1}^{k}P_{i}TM_i\leq T.$$(The proof of the extension of each positive multiplication operator $M_i$ on $A_e$ to a positive operator on $\bar{A_e}$ can be found in Part(b) on p.269 in \cite{AB}.) Let us put $R=|S-\sum_{i=1}^{k}P_{i}TM_i|$. Obviously, $$\left<g, Ru\right><\varepsilon\qquad \textrm{and} \qquad R=\left|S-\sum_{i=1}^{k}P_{i}TM_i\right|\leq S+\sum_{i=1}^{k}P_{i}TM_i\leq2T.$$ For each $n>N_1$, we have
\begin{eqnarray*}
\left<|f_{n}|, R|x_{n}|\right>&=&\left<|f_{n}|, R(|x_{n}|-u)^+\right>+\left<|f_{n}|, R(|x_{n}|\wedge u)\right>\\&\leq&\left<|f_{n}|, R(|x_{n}|-u)^+\right>+\left<|f_{n}|, Ru\right>\\&\leq&2|f_{n}|(T(|x_{n}|-u)^+)+\left<|f_{n}|, Ru\right>\\&\leq&2|f_{n}|(T(|x_{n}|-u)^+)+\left<(|f_{n}|-g)^+, Ru\right>+\left<g, Ru\right>\\&\leq&2|f_{n}|(T(|x_{n}|-u)^+)+2(|f_{n}|-g)^+(Tu)+\left<g, Ru\right>\\&<&2\varepsilon+2\varepsilon+\varepsilon=5\varepsilon.
\end{eqnarray*}This implies
\begin{eqnarray*}
 |f_n(Sx_{n})|&\leq&\left|\left<f_n, \left(S-\sum_{i=1}^{k}P_{i}TM_i\right)x_n\right>\right|+\sum_{i=1}^{k}\left|\left<f_n, P_{i}TM_{i}x_n\right>\right|
\\&\leq&\left<|f_n|, \left|S-\sum_{i=1}^{k}P_{i}TM_i\right||x_n|\right>+\sum_{i=1}^{k}\left|\left<f_n, P_{i}TM_{i}x_n\right>\right|\\&=&\left<|f_{n}|, R|x_{n}|\right>+\sum_{i=1}^{k}\left|\left<f_n, P_{i}TM_{i}x_{n}\right>\right|\\&<&5\varepsilon+\sum_{i=1}^{k}\left|\left<f_n, P_{i}TM_{i}x_{n}\right>\right|
\end{eqnarray*}holds for all $n>N_1$. To prove that $\lim_{n}f_n(Sx_{n})=0$, we need only to show that $\left<f_n, P_{i}TM_{i}x_{n}\right>\rightarrow0\quad (n\rightarrow\infty)$ for $i=1, 2,\cdot\cdot\cdot,k.$  Note that each $P_i$ is an order projection on $F^{\ast\ast}$. For each $f\in F^{*}$ we define $Q_if$ by $$(Q_{i}f)y=\left<f, P_{i}y\right>,\qquad \forall\,y\in F.$$We can easily see that $Q_{i}f\in F^{*}$ and $Q_{i}:F^{*}\rightarrow F^{*}$ is a bounded linear operator. Now we claim that $Q_{i}$ is sequentially $w^{*}$-continuous on $F^*$. If this claim is true, then $f_{n}\xrightarrow {w^{*}} 0$ in $F^{*}$ implies that $Q_{i}f_{n}\xrightarrow {w^{*}} 0$ in $F^{*}$. It turns out that  by the weak$^*$ Dunford-Pettis property we have$$\left<f_n, P_{i}TM_{i}x_{n}\right>=\left<TM_{i}x_n, Q_{i}f_{n}\right>\rightarrow0,\quad(n\rightarrow\infty)$$
since $M_{i}x_{n}\xrightarrow {w}0\quad(n\rightarrow\infty)$ in $E$ $(\textrm{or}\, \bar{A_e})$.
\par So, the key point is to prove the claim that each $Q_{i}$ defined above is sequentially $w^{*}$-continuous on $F^*$. To this end, assume $f_n\xrightarrow {w^{*}}0$ in $F^{*}$, and let $A_{F}$ denote the ideal generated by $F$ in $F^{\ast\ast}$. Then $f_n\xrightarrow {\sigma(F^{*},\,A_{F})}0$ in $F^{\ast}$ since $F$ is $\sigma$-Dedekind complete (cf. \cite[Theorem 4.43]{AB}). Let us recall that $P_{i}$ is an order projection on $F^{\ast\ast}$. Given $y\in F^{+}$. Since $0\leq P_{i}y\leq y$, we can see that $P_{i}y\in A_{F}$. Hence,$$\left<y, Q_{i}f_{n}\right>=\left<f_{n}, P_{i}y\right>\rightarrow0\quad(n\rightarrow\infty),$$which implies the sequential $w^{*}$-continuity of $Q_i$. The proof is finished.
\end{proof}
\vskip 5mm

\end{document}